\theoremstyle{plain}
\newtheorem{theorem}{Theorem}
\newtheorem{lemma}[theorem]{Lemma}
\newtheorem{definition}[theorem]{Definition}
\newtheorem{corollary}[theorem]{Corollary}
\newtheorem{claim}[theorem]{Claim}
\theoremstyle{remark}
\newcommand{\Z}{\mathbb{Z}}
\def \diam {\text{diam}}
\begin{document}

\title{Cartesian powers of graphs and consecutive radio labelings}
        \date{\today}

\author{Amanda Niedzialomski}

\maketitle
\begin{abstract}
For $k\in\mathbb{Z}^+$ and $G$ a simple connected graph, a $k$-radio labeling $f:V_G\to\Z^+$ of $G$ requires all pairs of distinct vertices $u$ and $v$ to satisfy $|f(u)-f(v)|\geq k+1-d(u,v)$.  When $k=1$, this requirement gives rise to the familiar labeling known as vertex coloring for which each vertex of a graph is labeled so that adjacent vertices have different ``colors".  We consider $k$-radio labelings of $G$ when $k=\diam(G)$.  In this setting, no two vertices can have the same label, so graphs that have radio labelings of consecutive integers are one extreme on the spectrum of possibilities.  Examples of such graphs of high diameter are especially rare and desirable.  We construct examples of arbitrarily high diameter, and explore further the tool we used to do this -- the Cartesian product of graphs -- and its effect on radio labeling.
\end{abstract}
\section{Introduction}
\subsection{Radio Labeling}
A radio labeling of a graph is a generalization of vertex coloring.  Given a graph $G$ (for us, $G$ is simple and connected) with vertex set $V_G$, any function $f:V_G\to\Z^+$ is a labeling of the vertices.  If that labeling satisfies the inequality
$$|f(u)-f(v)|\geq2-d(u,v)$$
for all distinct vertices $u,v\in V_G$, then $f$ is a coloring of $G$.  This inequality is exactly the condition needed to guarantee that adjacent vertices are labeled with a different ``color".  We can generalize this notion of graph coloring by changing the inequality to the following:
\begin{equation}
|f(u)-f(v)|\geq k+1-d(u,v)\label{rc}
\end{equation}
for some $k\in\Z$, $1\leq k\leq\diam(G)$.  A labeling satisfying this inequality for all vertices $u,v$ is called a $k$-radio labeling \cite{Chartrand}.  In particular, it guarantees that any two vertices that are $k$ apart or less in distance have different labelings (which is the reason that $k$ is bounded above by $\diam(G)$).  When $k=\diam(G)$, we call a $k$-radio labeling simply a radio labeling, and we call inequality \eqref{rc} the radio condition.

Radio labeling is connected to the Channel Assignment Problem, which aims to assign frequencies (channels) to radio transmitters in a way that minimizes interference.  So, if two transmitters are geographically close, it is optimal to have their frequencies be far apart.  This directly relates to the idea of radio labeling of a graph by thinking of the vertices as radio transmitters and the labels as radio frequencies (hence the name) \cite{Georges}, \cite{Hale}, \cite{Heuvel}.

Just as with vertex coloring, we are interested in minimizing the largest label given to a vertex.  For any particular labeling, the largest label used is called the span of the labeling.  In vertex coloring, the minimum possible span of a graph is called its chromatic number.  In radio labeling, the minimum span is called the radio number, denoted $\textrm{rn}(G)$.  Finding a closed formula for the radio numbers of different types of graphs is the goal, but it remains unknown for all but a handful of them \cite{Benson}, \cite{Fernandez}, \cite{Li}, \cite{Liu2}, \cite{Liu1}.

Even the discrete problem of finding the radio number of a specific example $G$ can be difficult because of the size of the computation.  One way of finding the radio number involves analyzing all possible permutations of the vertices.  Given a radio labeling $f$ of a graph with vertices $\{v_1, \dots, v_n\}$, we can rename (order) the vertices as $x_1, \dots, x_n$ so that $f(x_i)<f(x_j)$ if and only if $i<j$.  On the other hand, we can produce a radio labeling of a graph by picking an ordering of the vertices and then forcing the labels to increase with respect to that order.  Since we are interested in minimizing span, we always choose the smallest label that works at the given moment.  Therefore, an ordering of the vertices uniquely determines a radio labeling if we enforce this increasing condition.  It is this radio labeling we refer to when we speak of the radio labeling ``induced'' by the ordering.  So, one brute force way of computing the radio number of a graph is to consider all radio labelings induced from all possible orderings (permutations) of the vertices, and then choose the smallest span of all of them.  Even after an ordering is fixed, producing the induced radio labeling is nontrivial, so producing $|V_{G}|!$ labelings is computationally very intense.

\subsection{Consecutive Radio Labeling}
Unlike all other $k$-radio labelings of a graph, if $k=\diam(G)$, then no two distinct vertices can have the same label; this tells us that $\textrm{rn}(G)\geq|V_G|$.  If $\textrm{rn}(G)=|V_G|$, then there is a radio labeling of $G$ of consecutive integers $1,\dots,|V_G|$.  Such a labeling is called a consecutive radio labeling.  The complete graph $K_n$ of $n$ vertices is an example of a graph that has such a labeling.   Since $\diam(K_n)=1$, the radio condition is trivially satisfied, so any labeling with consecutive integers will work.  A more interesting example is that of the Peterson graph (Figure \ref{fig:PetersonGraph}).  Its diameter is 2, so the radio condition is not satisfied for all possible consecutive labelings, but it does have a consecutive radio labeling.  

\begin{figure}[h]
\begin{center}
\includegraphics[scale=.5]{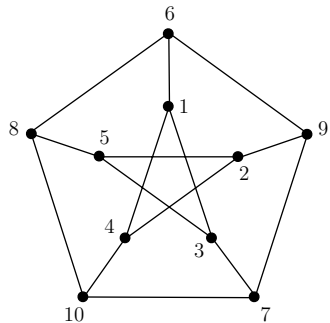}
\caption{The Peterson graph with a consecutive radio labeling}
\label{fig:PetersonGraph}
\end{center}
\end{figure}

The qualities that the Peterson Graph possesses that allow it to have a consecutive radio labeling can be boiled down to the following: it is a diameter 2 graph that has an ordering of its vertices such that each vertex is diameter away from the next vertex in the ordering.  The existence of such an ordering of the vertices is a rare property for a graph to have, but this alone will not always be sufficient to produce a graph with a consecutive radio labeling.  It is only because we also had a graph of diameter 2 in this case that the qualifications were so simple.

In general, for a graph $G$ with vertices $\{v_1,v_2,\dots,v_n\}$, we would need an ordering of the vertices $x_1,x_2,\dots, x_n$ such that 
$$d(x_i,x_{i+c})\geq\diam(G)-c+1.$$
This gives an idea of how special graphs that have consecutive radio labelings are, and how difficult it can be to produce examples of them, particularly examples of high diameter.

\subsection{Cartesian Product of Graphs}
A tool we will be using in our construction is the Cartesian product of graphs.  Given two graphs $G$ and $H$, their Cartesian product, denoted $G\ \Box \ H$, is a graph defined by the following:
\begin{itemize}
\item[(1)] The vertex set $V_{G\Box H}$ is given by $V_G\times V_H$.
\item[(2)]  Two vertices $(u,v),(u',v')\in V_{G\Box H}$ are adjacent if the vertices in one coordinate are the same, and the vertices in the other coordinate are adjacent in their respective original graph.  That is, if
\begin{itemize}
\item[(i)] $u=u'$ and $v$ is adjacent to $v'$ in $H$, or
\item[(ii)] $v=v'$ and $u$ is adjacent to $u'$ in $G$.
\end{itemize}
\end{itemize} 
See Figure \ref{fig:K3BoxP3} for an example.

\begin{figure}[h]
\begin{center}
\includegraphics[scale=.6]{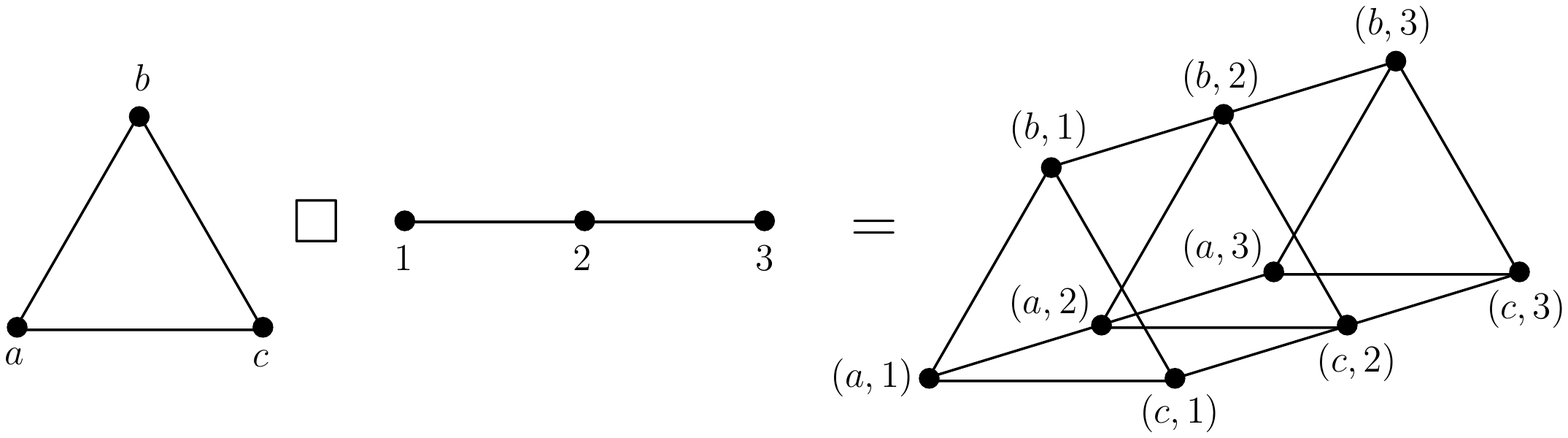}
\caption{Cartesian product example: $K_3\ \Box\ P_3$}
\label{fig:K3BoxP3}
\end{center}
\end{figure}

One very nice property of the Cartesian product of graphs is that distance is inherited from the original graphs: 
$$d((u,v),(u',v'))=d(u,u')+d(v,v').$$
This allows us to learn about $\textrm{rn}(G\ \Box\ H)$ by looking at $\textrm{rn}(G)$ and $\textrm{rn}(H)$.  We will be considering the Cartesian product of $t$ copies of a graph $G$, denoted $G^t$:
$$G^t=\underbrace{G\ \Box\ G\ \Box\cdots\Box\ G}_{t\text{ copies of }G}.$$
Vertices of $G^t$ can be represented as $t$-tuples with entries in $V_G$, and there is an edge between two vertices $(v_{i_1},v_{i_2},\dots,v_{i_t})$ and $(v_{j_1},v_{j_2},\dots,v_{j_t})$ if, for some $a\in\{1,\dots,t\}$, $v_{i_a}$ is adjacent to $v_{j_a}$ in $G$, and $v_{i_k}=v_{j_k}$ for all $k\neq a$.

The graph of $G^t$ can be quite complicated, even if $G$ itself is simple.  (The number of vertices of $G^t$ is $|V_G|^t$ and, if $E_G$ denotes the set of edges in $G$, then $G^t$ has $t|V_G|^{t-1}|E_G|$ edges.)

Distance is given by 
$$d_{G^t}((v_{i_1},v_{i_2},\dots,v_{i_t}),(v_{j_1},v_{j_2},\dots,v_{j_t}))=\sum_{k=1}^td_G(v_{i_k},v_{j_k}).$$
Diameter is then easily computed: $\diam(G^t)=\sum_{k=1}^{t}\diam(G)=t\cdot\diam(G)$.  

When $G$ has a consecutive radio labeling, we will be looking at $G^t$ for new examples of graphs with such a labeling.  Since $\diam(G^t)$ increases as $t$ does, we have some hope of finding high diameter examples by looking here.  Actually, one example is well known.  We have already seen that the Peterson graph (which we will hereafter denote $P$) has a consecutive radio labeling  (Figure \ref{fig:PetersonGraph}).  A consecutive radio labeling can also be exhibited for the diameter 4 graph $P^2$.  This finding is  exciting for the cause for finding higher diameter examples, and it can spark some (very optimistic) conjectures.  For example, it is tempting to hope that $P^t$ has a consecutive radio labeling for all $t\in\Z^+$.  This is desirable because if $t$ is unbounded above then so is $\diam(P^t)$.  It isn't true, however, as we will see in $\S$\ref{highPowers}.  Nevertheless, we were able to prove an even better result in the following section.  Better because it produces a family of examples with consecutive radio labelings in which there is an example for every possible diameter (whereas $P^t$ has even diameter).

\section{Consecutive radio labelings for certain powers of $K_n$}
We construct a family of examples of arbitrary diameter that have consecutive radio labelings by taking Cartesian powers of the complete graph $K_n$.  We will begin by giving an ordering\footnote{The definition of the ordering we give in this section allows a mental picture to develop that we will utilize.  Alternatively, the same ordering can be described as follows:  If $n\geq3$ and $1\leq t\leq n$, then this recursive definition is an ordering on $V_{K_n^t}$:
$$x^1_i=v_i,\text{ and, for }k=1,\dots,n,\ x^t_{mn+k}=\left(x^{t-1}_{\lfloor m/n\rfloor n+k}, v_{k+m-\lfloor m/n\rfloor\ (\text{mod}\ n)}\right)$$
where we use the convention that $a\ (\text{mod}\ n)\in\{1,\dots,n\}$.
} of the vertices of $K_n^t$, and then prove that this ordering induces a consecutive radio labeling.

\subsection{Definition of $\mathbf{x_1,x_2,\dots,x_{n^t}}$}\label{orderingDef}
Let $V_{K_n}=\{v_1, \dots, v_n\}$ be the vertices of $K_n$, $n\geq3$.  Consider $K_n^t$ where $t\in\Z_{\leq n}^+$.  We describe the desired ordering of the vertices of $K_n^t$ in groups of $n$ vertices at a time.  The first $n$ vertices are the $t$-tuples
\begin{align*}
&x_1 = (v_1, v_1, \dots, v_1)\\
&x_2 = (v_2, v_2, \dots, v_2)\\
&\ \vdots\\
&x_n = (v_n, v_n, \dots, v_n).
\end{align*}
For the sake of easier reference, we will think of these $n$ vertices as an $n\times t$ matrix $A^1$ where $a^1_{i,j}$ is the $j$th coordinate of $x_i$.  That is, 
$$ 
A^1=\left[
\begin{array}{cccc}
v_1&v_1&\dots&v_1\\
v_2&v_2&\dots&v_2\\
\vdots\\
v_n&v_n&\dots&v_n\\
\end{array}\right].
$$
Let $\sigma\in S_{V_{K_n}}$ be the $n$-cycle $(v_1\ v_2\ \cdots\ v_n)$.  We produce a second matrix $A^2$ where
$$a^2_{i,j}=
\begin{cases}
\sigma(a^1_{i,j}) & \text{if }j=t\\
 a^1_{i,j} & \text{otherwise}
\end{cases}.
$$
We then form subsequent matrices by the following rule.  To produce $A^k$, we first determine $p$, the largest integer such that $k\equiv1\pmod{n^p}$.  Then $A^k$  is a matrix made up of entries
$$a^k_{i,j}=
\begin{cases}
\sigma(a^{k-1}_{i,j}) & \text{if }j=t-p\\
 a^{k-1}_{i,j} & \text{otherwise}
\end{cases}.
$$
Using this definition we create $n^{t-1}$ matrices of dimensions $n\times t$: $A^1, A^2, \dots, A^{n^{t-1}}$.  Each matrix $A^k$ is identical to the one that came before ($A^{k-1}$) except for a single column which differs by an application of $\sigma$.  The $n$ rows of each matrix (which total $n^t$ rows over all $n^{t-1}$ matrices) correspond to vertices of $K_n^t$, and tell us our ordering, $n$ vertices at a time.  Formally, if $i=bn+c$, $c\in\{1,2,\dots,n\}$, then 
\begin{equation}\label{xisdefinition}x_i=x_{bn+c}=(a^{b+1}_{c,1},a^{b+1}_{c,2},\dots,a^{b+1}_{c,t}),\end{equation}
the $c$th row of matrix $A^{b+1}$.  
\subsection{The $\mathbf{x_i}$'s Form an Ordering}
Our first goal is to show that this definition of $x_1,\dots, x_{n^t}$ is actually an ordering of the vertices of $K_n^t$.  Certainly $\{x_1,\dots,x_{n^t}\}\subseteq V_{K_n}$.  This coupled with the fact that $|V_{K_n}|=n^t$ means we only need to show that our proposed ordering has no repetition: $x_i\neq x_j$ for all $i\neq j$.  We can make this task a bit easier by noticing some structure of the $A^k$'s.  $A^1$ has a structure that is inherited by  $A^k$ for all $k$: $a_{i,j}^k=\sigma(a_{i-1,j}^k)$.  Therefore, any row of $A^k$ determines all of $A^k$.  For this reason, it suffices to show that no two matrices have the same first row.

Since we are narrowing our scope to the first rows of each matrix, let's make a new matrix $A$ comprised of them.  Let $A$ be the $n^{t-1}\times t$ matrix defined by $a_{i,j}=a^i_{1,j}$. We can give an equivalent definition by using the definition of $A^k$.  To define $a_{i,j}$, let $q$ be the largest integer such that $i\equiv 1\pmod{n^q}$.  Then
$$a_{i,j}=
\begin{cases}
v_1 & \text{if }i=1\\
\sigma(a_{i-1,j}) & \text{if } j=t-q\\
a_{i-1,j} & \text{otherwise} 
\end{cases}.
$$
So now we show that $A$ has no identical rows.  We will start by studying the specific and repetitive structure of $A$.  To do this, we break up each column into ``blocks'' -- what we'll call a certain collection of column vectors that, when adjoined, produce the original column.  Fix a column $j$.  We wish to break up the column into groups of $n^{t-j}$ entries which will form the blocks.  So, the vectors
\begin{equation}\label{jblocks}
\left[\begin{array}{c}
a_{1,j}\\
\vdots\\
a_{n^{t-j},j}
\end{array}\right],\left[\begin{array}{c}
a_{n^{t-j}+1,j}\\
\vdots\\
a_{2n^{t-j},j}
\end{array}\right],\dots,\left[\begin{array}{c}
a_{(n^{j}-1)n^{t-j}+1,j}\\
\vdots\\
a_{n^{t},j}
\end{array}\right] 
\end{equation}
are the blocks of column $j$.  Note that the blocks are of uniform size, but that size is dependent on the column.  The first column of $A$ has only one block, while in the last column of $A$ each entry is itself a block. In order to easily reference which column a block comes from, we will commonly call a block from column $j$ a $j$-block.  In summary:
\begin{definition}
A \textbf{j-block} is any one of the vectors listed in line \eqref{jblocks}.
\end{definition}
We will call the set of rows that are associated to the entries of a block the scope of the block.
\begin{definition}
The \textbf{scope} of a $j$-block $\left[\begin{array}{c}
a_{cn^{t-j}+1,j}\\
\vdots\\
a_{(c+1)n^{t-j},j}
\end{array}\right]$ is the set with rows $cn^{t-j}+1$ through $(c+1)n^{t-j}$ of $A$ as its elements: $\{[a_{cn^{t-j+1},h}]_{1\leq h\leq t},\dots,[a_{(c+1)n^{t-j},h}]_{1\leq h\leq n}\}$.  The scope of multiple $j$-blocks is the union of the scopes of the individual $j$-blocks.
\end{definition}

\begin{claim}\label{blockshaveidenticalentries}
Given a $j$-block of $A$ that has entries $a_{cn^{t-j}+1,j}, \dots, a_{(c+1)n^{t-j},j}$, then $a_{cn^{t-j}+1,j} = \dots = a_{(c+1)n^{t-j},j}$.
\end{claim}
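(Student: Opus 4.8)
The plan is to read off directly from the recursive definition of $A$ exactly which rows cause a change in column $j$, and then to observe that no such change occurs strictly inside the scope of a $j$-block.

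First I would record the key observation extracted from the definition of $a_{i,j}$: for $i\geq 2$, the entry $a_{i,j}$ can differ from $a_{i-1,j}$ (by an application of $\sigma$) only when $j=t-q$, where $q$ is the largest integer with $i\equiv 1\pmod{n^q}$; since $t-j=q$ in that case, this forces $i\equiv 1\pmod{n^{t-j}}$. Taking the contrapositive: \emph{if $i\geq 2$ and $i\not\equiv 1\pmod{n^{t-j}}$, then $a_{i,j}=a_{i-1,j}$.}

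Next I would apply this to the $j$-block with entries $a_{cn^{t-j}+1,j},\dots,a_{(c+1)n^{t-j},j}$, whose scope is the set of rows $cn^{t-j}+1,\dots,(c+1)n^{t-j}$. These are $n^{t-j}$ consecutive integers, so they represent each residue class modulo $n^{t-j}$ exactly once, and the unique one congruent to $1$ is $cn^{t-j}+1$. Hence every $i$ with $cn^{t-j}+2\leq i\leq(c+1)n^{t-j}$ satisfies $i\geq 2$ and $i\not\equiv 1\pmod{n^{t-j}}$, so the observation gives $a_{i,j}=a_{i-1,j}$. Chaining these equalities yields
$$a_{cn^{t-j}+1,j}=a_{cn^{t-j}+2,j}=\cdots=a_{(c+1)n^{t-j},j},$$
which is the Claim. (When $j=t$ a $j$-block is a single entry and there is nothing to prove; when $c=0$ the block's rows are $1,\dots,n^{t-j}$ and the argument is unaffected since it invokes only rows $i\geq 2$.)

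This is essentially a bookkeeping argument, so I do not anticipate a genuine obstacle. The one place to be careful is the translation of ``the largest $q$ with $i\equiv 1\pmod{n^q}$ equals $t-j$'' into the clean divisibility condition $i\equiv 1\pmod{n^{t-j}}$: we only ever need the implication ``column $j$ changes at row $i$ $\implies$ $i\equiv 1\pmod{n^{t-j}}$'', so the finer subtleties about the exact value of $q$ (as opposed to merely its size relative to $t-j$) never enter the proof.
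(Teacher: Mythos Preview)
Your proof is correct and follows essentially the same approach as the paper's: both extract from the recursive definition that $a_{i,j}$ can differ from $a_{i-1,j}$ only when $i\equiv 1\pmod{n^{t-j}}$, then check that no index strictly inside the $j$-block (i.e., $i\in\{cn^{t-j}+2,\dots,(c+1)n^{t-j}\}$) satisfies that congruence. Your version is marginally more careful about the edge cases $j=t$ and $c=0$, but the argument is the same.
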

\begin{proof}
Based on the definition of $A$, $a_{i,j}=a_{i-1,j}$ unless $j=t-q$ where $q$ is the largest integer such that $i\equiv1\pmod{n^q}$.  Equivalently, $a_{i,j}=a_{i-1,j}$ unless $q=t-j$.  The only way to get any distinct entries in the block is for one of the entries (excluding the first) to be the image under $\sigma$ of the previous entry.  So the question is, is it possible for $q=t-j$ when $i\in\{cn^{t-j}+2, cn^{t-j}+3, \dots, (c+1)n^{t-j}\}$?  One condition for $q=t-j$ is that $i\equiv1\pmod{n^{t-j}}$.  It's clear that this cannot be the case for any of the possibilities for $i$:
\begin{align*}
cn^{t-j}+2&\equiv2\pmod{n^{t-j}}\\
cn^{t-j}+3&\equiv3\pmod{n^{t-j}}\\
&\ \vdots\\  
(c+1)n^{t-j}&\equiv0\pmod{n^{t-j}}.
\end{align*}
Therefore, since $j\neq t-q$ for all $i\in\{cn^{t-j}+2, cn^{t-j}+3, \dots, (c+1)n^{t-j}\}$, each entry of the vector must be the same as the first entry $a_{cn^{t-j}+1,j}$.
\end{proof}
As a result of this claim, along with the definition of $A$, we know that adjacent blocks in a column can only have two possible relationships: either they are identical, or the second block has entries which are the image under $\sigma$ of the first block's entries.  We'd like to know when adjacent blocks are identical.
\begin{claim}\label{blockrepetition}
The adjacent blocks in column $j$ of $A$
$
\left[\begin{array}{c}
a_{cn^{t-j}+1,j}\\
\vdots\\
a_{(c+1)n^{t-j},j}
\end{array}\right],
\left[\begin{array}{c}
a_{(c+1)n^{t-j}+1,j}\\
\vdots\\
a_{(c+2)n^{t-j},j}
\end{array}\right]
$ are identical if and only if $c+1$ divides $n$.
\end{claim}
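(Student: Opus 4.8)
The plan is to reduce the comparison of two full blocks to a single pair of entries and then to detect whether the cyclic shift $\sigma$ is triggered at the boundary between the blocks. First I would invoke Claim \ref{blockshaveidenticalentries}: within each $j$-block every entry is equal, so the first block is constantly $a_{cn^{t-j}+1,j}$ and the second is constantly $a_{(c+1)n^{t-j}+1,j}$, and the two blocks are identical if and only if these two constant values agree. By the observation following Claim \ref{blockshaveidenticalentries}, a block is either an exact copy of its predecessor or its entrywise image under $\sigma$; since $\sigma$ is the $n$-cycle $(v_1\,v_2\,\cdots\,v_n)$ with $n\ge 3$ it has no fixed points, so the two blocks coincide precisely when $\sigma$ is \emph{not} applied in column $j$ as we pass from the last row of the first block to the first row of the second, that is, at row $i=(c+1)n^{t-j}+1$.

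Next I would pin down exactly when that boundary application occurs. By the definition of $A$, column $j$ is advanced by $\sigma$ at row $i$ exactly when $j=t-q$, where $q$ is the largest integer with $i\equiv 1\pmod{n^{q}}$; equivalently when $q=t-j$. Substituting $i=(c+1)n^{t-j}+1$ gives $i-1=(c+1)n^{t-j}$, so everything reduces to reading off the largest power of $n$ dividing $(c+1)n^{t-j}$ and comparing it with $t-j$. The factor $n^{t-j}$ already supplies $t-j$, and whether $q$ is forced to exceed $t-j$ — hence whether the boundary shift is suppressed and the blocks agree — is governed entirely by the way $c+1$ sits relative to $n$. Carrying out this comparison is precisely what converts the boundary condition into the divisibility criterion in the statement, giving that the two adjacent blocks are identical if and only if $c+1$ divides $n$.

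The step I expect to be the main obstacle is this last piece of bookkeeping: one has to handle the $n$-adic valuation of $(c+1)n^{t-j}$ cleanly, keep track of the admissible range of the block index $c$ so that the second block really lies inside the $n^{t-j}$-sized partition of column $j$, and verify that the base factor $n^{t-j}$ can never on its own trigger the shift. Once the valuation comparison is settled the equivalence with the claimed divisibility condition is immediate, and no separate case analysis on $j$ is required since the argument is uniform in $j$.
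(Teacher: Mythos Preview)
Your approach is essentially the same as the paper's: reduce via Claim~\ref{blockshaveidenticalentries} to whether $\sigma$ is applied in column $j$ at the boundary row $i=(c+1)n^{t-j}+1$, then read off the largest $q$ with $n^{q}\mid (c+1)n^{t-j}$ and compare it to $t-j$. The bookkeeping you flag as the ``main obstacle'' is in fact immediate---since $n^{t-j}$ already divides $i-1$ we always have $q\ge t-j$, so the only question is whether $q>t-j$, i.e., whether $n\mid c+1$---and no separate check on the range of $c$ is needed.
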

\begin{proof}
It follows from Claim \ref{blockshaveidenticalentries} that the blocks are identical if and only if $a_{(c+1)n^{t-j},j}=a_{(c+1)n^{t-j}+1,j}$.  We know from our definition of $A$ that this happens only when $j\neq t-q$, where $q$ is computed for $i=(c+1)n^{t-j}+1$.  So what is $q$ in this case?  Certainly $t-j$ is a candidate since $(c+1)n^{t-j}+1\equiv1\pmod{n^{t-j}}$.  In fact, $q\neq t-j$ if and only if $q>t-j$, which is equivalent to requiring that $c+1$ divides $n$.
\end{proof}
The blocks have now given us a really detailed picture of what's happening with our matrix $A$.  If we look down an arbitrary column $j$, the column is partitioned into $n^{j-1}$ blocks, each with $n^{t-j}$ entries.  The entries of any fixed block are identical.  The first block in each column has all entries of $v_1$.  When changes do happen down a column, they change according to $\sigma$; so $v_1$ can be followed by $v_2$ then $v_3$ and so on.  And we now know by Claim \ref{blockrepetition}  exactly when those changes will not occur due to the repetition of adjacent blocks.  Now we want to examine how the structure of the different columns relate.

It happens that the scope of a ($j-1$)-block is equal to the scope of a collection of $j$-blocks -- the appropriate $n$ of them in a row.  This is immediate because of the size of the blocks.  A ($j-1$) block has $n^{t-j+1}$ entries, and $j$-blocks have $n^{t-j}$ entries.  So, for every block in column $j-1$, there are ${n^{t-j+1}}/{n^{t-j}}=n$ blocks in column $j$.  The interesting thing is that those $n$ $j$-blocks with the same scope as the single ($j-1$)-block must be distinct (in the sense that no two of these $j$-blocks can have the same entries).  We will prove this in the following claim.  Note that the blocks listed in the claim are in column $j$, of the correct size ($n^{t-j}$ entries), they are adjacent, and there are $n$ of them.  Also note that they cover rows $bn^{t-j+1}+1$ through $(b+1)n^{t-j+1}$.  This is significant because $\left[\begin{array}{c}
a_{bn^{t-j+1}+1,j+1}\\
\vdots\\
a_{(b+1)n^{t-j+1},j+1}
\end{array}\right] $ is a ($j-1$)-block.
\begin{claim}\label{distinctblocks}
The following $j$-blocks in $A$ are all distinct:
$$
\left[\begin{array}{c}
a_{bn^{t-j+1}+1,j}\\
\vdots\\
a_{bn^{t-j+1}+n^{t-j},j}
\end{array}\right],\left[\begin{array}{c}
a_{bn^{t-j+1}+n^{t-j}+1,j}\\
\vdots\\
a_{bn^{t-j+1}+2n^{t-j},j}
\end{array}\right],\dots,\left[\begin{array}{c}
a_{bn^{t-j+1}+(n-1)n^{t-j}+1,j}\\
\vdots\\
a_{(b+1)n^{t-j+1},j}
\end{array}\right] 
.$$
\end{claim}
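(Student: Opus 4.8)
The plan is to reduce the claim to a statement about the $n$-cycle $\sigma$. By Claim~\ref{blockshaveidenticalentries} each of the $n$ listed $j$-blocks is constant, so let $w_m\in V_{K_n}$ denote the common entry of the $m$th block, $m=1,\dots,n$. Since the list contains exactly $n=|V_{K_n}|$ blocks, showing the blocks are distinct is the same as showing $w_1,\dots,w_n$ are pairwise distinct.

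The heart of the argument is that no two \emph{consecutive} blocks in this list are equal. As observed just after Claim~\ref{blockshaveidenticalentries}, adjacent $j$-blocks are either equal or satisfy $w_{m+1}=\sigma(w_m)$, so it suffices to rule out the first case. The $n$ blocks in the claim are precisely the $n$ consecutive $j$-blocks whose scopes together make up the scope of the single $(j-1)$-block on rows $bn^{t-j+1}+1,\dots,(b+1)n^{t-j+1}$; in the indexing of Claim~\ref{blockrepetition} these are the blocks indexed by $c=bn,bn+1,\dots,bn+n-1$, and the $n-1$ internal adjacencies correspond to $c=bn,bn+1,\dots,bn+n-2$. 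Applying Claim~\ref{blockrepetition}, an adjacency is an equality only when its divisibility condition on $c+1$ is met, and since these $c+1$ form $n-1$ consecutive integers, a quick residue check shows the condition is never met. Hence $w_{m+1}=\sigma(w_m)$ for all $m=1,\dots,n-1$.

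It follows that $w_m=\sigma^{m-1}(w_1)$ for each $m$. Because $\sigma=(v_1\ v_2\ \cdots\ v_n)$ is an $n$-cycle, the points $w_1,\sigma(w_1),\dots,\sigma^{n-1}(w_1)$ form a full orbit and are therefore pairwise distinct (indeed they exhaust $V_{K_n}$). So the $w_m$ are distinct and the $n$ blocks are distinct, proving the claim. The one genuinely fiddly step is the middle one: correctly identifying the block indices $c$ carried by the $n-1$ internal adjacencies so that Claim~\ref{blockrepetition} can be applied, and verifying that none of them triggers equality. Everything else is a short consequence of the two previous claims together with the fact that $\sigma$ has order $n$.
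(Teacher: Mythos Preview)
Your proof is correct and follows essentially the same route as the paper's: rewrite the block indices as $c=bn,bn+1,\dots,bn+n-1$, invoke Claim~\ref{blockrepetition} on each of the $n-1$ internal adjacencies (noting that $c+1\in\{bn+1,\dots,bn+n-1\}$ never satisfies the divisibility condition), and then use that $\sigma$ is an $n$-cycle to conclude $w_1,\sigma(w_1),\dots,\sigma^{n-1}(w_1)$ are pairwise distinct. The only difference is cosmetic: you introduce the notation $w_m$ and phrase the residue check abstractly, while the paper writes out the index rewriting and the non-divisibilities $n\nmid bn+1,\dots,n\nmid bn+n-1$ explicitly.
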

\begin{proof}
Let's begin by proving that the first two $j$-blocks $
\left[\begin{array}{c}
a_{bn^{t-j+1}+1,j}\\
\vdots\\
a_{bn^{t-j+1}+n^{t-j},j}
\end{array}\right],\left[\begin{array}{c}
a_{bn^{t-j+1}+n^{t-j}+1,j}\\
\vdots\\
a_{bn^{t-j+1}+2n^{t-j},j}
\end{array}\right]$ 
are distinct.  In order to easily reference Claim \ref{blockrepetition}, we rewrite the indices of the entries in the form that appears there:
$$
\left[\begin{array}{c}
a_{bn^{t-j+1}+1,j}\\
\vdots\\
a_{bn^{t-j+1}+n^{t-j},j}
\end{array}\right]=\left[\begin{array}{c}
a_{(bn)n^{t-j}+1,j}\\
\vdots\\
a_{(bn+1)n^{t-j},j}
\end{array}\right],\left[\begin{array}{c}
a_{bn^{t-j+1}+n^{t-j}+1,j}\\
\vdots\\
a_{bn^{t-j+1}+2n^{t-j},j}
\end{array}\right]=
\left[\begin{array}{c}
a_{(bn+1)n^{t-j+1}+1,j}\\
\vdots\\
a_{(bn+2)n^{t-j+1},j}
\end{array}\right].$$
Now we are already done because (by Claim \ref{blockrepetition}) showing these two blocks are distinct is equivalent to showing $n$ does not divide $bn+1$, which is immediate.

With the same argument we can show that, in this collection of $n$ blocks, adjacent blocks are distinct.  We proceed by similarly rearranging the remaining indicies to be compatible with the previous claim.
\begin{align*}
\left[\begin{array}{c}
a_{bn^{t-j+1}+2n^{t-j}+1,j}\\
\vdots\\
a_{bn^{t-j+1}+3n^{t-j},j}
\end{array}\right]&=\left[\begin{array}{c}
a_{(bn+2)n^{t-j}+1,j}\\
\vdots\\
a_{(bn+3)n^{t-j},j}
\end{array}\right]\\
&\ \vdots\\
\left[\begin{array}{c}
a_{bn^{t-j+1}+(n-1)n^{t-j}+1,j}\\
\vdots\\
a_{(b+1)n^{t-j+1},j}
\end{array}\right]&=\left[\begin{array}{c}
a_{(bn+n-1)n^{t-j}+1,j}\\
\vdots\\
a_{(bn+n)n^{t-j},j}
\end{array}\right].
\end{align*}
Showing that adjacent blocks are distinct is equivalent to showing that $n$ does not divide $bn+2, bn+3, \dots, bn+n-1$, which is again immediate.  Because of this we can now say the following: if the first block has entries equal to $v_k$, then the second has entries of $\sigma(v_k)$, and the third has entries of $\sigma^2(v_k)$, $\dots$, and the $n$th has entries of $\sigma^{n-1}(v_k)$.  Since $\sigma$ is an $n$-cycle, $v_k\neq\sigma(v_k)\neq\sigma^2(v_k)\neq\cdots\neq\sigma^{n-1}(v_k)$, which confirms that the $n$ blocks are distinct.
\end{proof}
We are now able to prove matrix $A$ has no repeated rows by showing that an arbitrary row can only exist in one place in $A$.
\begin{lemma}
The matrix $A$ has no two rows with identical entries.
\end{lemma}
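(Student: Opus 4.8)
The plan is to argue by contradiction, exploiting the nested ``tree'' structure of the blocks. Suppose rows $i$ and $i'$ of $A$ are distinct yet have identical entries. The first thing to establish is a purely arithmetic nesting fact: for each $j\ge 2$, the scope of every $j$-block is contained in the scope of a unique $(j-1)$-block, and conversely the scope of each $(j-1)$-block is the disjoint union of the scopes of exactly $n$ consecutive $j$-blocks. Indeed, the $j$-block with scope rows $cn^{t-j}+1,\dots,(c+1)n^{t-j}$ is contained in the $(j-1)$-block with scope rows $dn^{t-j+1}+1,\dots,(d+1)n^{t-j+1}$ precisely when $d=\lfloor c/n\rfloor$; letting $c$ run through $dn,dn+1,\dots,dn+n-1$ for a fixed $d$ recovers exactly the list of $n$ $j$-blocks appearing in Claim~\ref{distinctblocks} with $b=d$.

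Next I would locate the column in which rows $i$ and $i'$ first ``split apart''. Column $1$ has a single block, so rows $i$ and $i'$ certainly lie in the same $1$-block; column $t$ consists of singleton blocks, so since $i\neq i'$ they lie in different $t$-blocks. Hence there is a least index $j$ with $2\le j\le t$ for which rows $i$ and $i'$ lie in different $j$-blocks, and by minimality of $j$ they lie in a common $(j-1)$-block, say the one indexed by $b$. The $n$ $j$-blocks whose scopes tile that $(j-1)$-block are exactly the ones in Claim~\ref{distinctblocks}, and rows $i$ and $i'$ lie in two different members of this family. By Claim~\ref{blockshaveidenticalentries} each of these blocks is constant, and by Claim~\ref{distinctblocks} the $n$ of them take pairwise distinct values, so $a_{i,j}\neq a_{i',j}$. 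This contradicts the assumption that rows $i$ and $i'$ are identical, and the lemma follows.

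The only genuine work lies in the bookkeeping of the first paragraph: verifying the block-nesting via the $\lfloor c/n\rfloor$ computation, and checking that the family of $n$ $j$-blocks sitting inside a fixed $(j-1)$-block is precisely the family to which Claim~\ref{distinctblocks} applies. Once the $n$-adic indexing is pinned down, the contradiction is an immediate consequence of Claims~\ref{blockshaveidenticalentries} and \ref{distinctblocks}, so I do not expect any further obstacle.
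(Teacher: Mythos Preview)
Your proof is correct and is essentially the same as the paper's: both exploit the nested block structure together with Claims~\ref{blockshaveidenticalentries} and~\ref{distinctblocks}, with the only cosmetic difference that the paper phrases the argument as a forward induction (``sharing the first $k$ entries forces the same $k$-block'') while you phrase it contrapositively by locating the least $j$ at which the two rows fall into different $j$-blocks.
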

\begin{proof}
We begin by proving the following claim using an induction argument.\\
\textbf{Claim:} If two rows share their first $k$ entries, then they both belong to the scope of the same $k$-block.  \emph{Pf}: As we've already mentioned, the first column of $A$ is itself one block, so the $k=1$ case is trivial.  Suppose the claim is true for $k$, and let the $x$th and $y$th rows share their first $k+1$ entries.  Then they must share their first $k$ entries, so by assumption rows $x$ and $y$ belong to the scope of a single $k$-block.  By Claim 3, this scope is equal to that of a collection of $n$ distinct ($k+1$)-blocks.  Since rows $x$ and $y$ also share the ($k+1$)st entry, they can only be in the scope of one of those ($k+1$)-blocks, which proves the claim. 

This result tells us that if rows $x$ and $y$ share all $t$ of their entries, then they belong to the scope of the same $t$-block.  But a $t$-block consists of $n^{t-t}=1$ entry, so its scope contains only one row.  Then $x=y$ and no two rows of $A$ can have identical entries.
\end{proof}

\begin{theorem}\label{orderingofknt}
Let $n\in\Z_{\geq3}$, $t\in\Z^+_{\leq n}$.  As defined in line \eqref{xisdefinition}, $x_1,\dots,x_{n^t}$ is an ordering of the vertices of $K_n^t$.
\end{theorem}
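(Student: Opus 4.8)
The plan is to deduce the theorem from the preceding Lemma (the matrix $A$ has no two equal rows) together with the internal symmetry of each $A^k$. Since each $x_i$ from \eqref{xisdefinition} is a $t$-tuple with entries in $V_{K_n}$, we have $\{x_1,\dots,x_{n^t}\}\subseteq V_{K_n^t}$, and $|V_{K_n^t}|=n^t$; so, as already observed, it suffices to prove $x_i\neq x_j$ whenever $i\neq j$. The first ingredient I would record is that, because $a^k_{i,j}=\sigma(a^k_{i-1,j})$, the $c$th row of $A^k$ is obtained from its first row by applying $\sigma$ coordinatewise $c-1$ times, and the first row of $A^k$ is by definition the $k$th row of $A$. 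Writing $\widetilde\sigma$ for the coordinatewise action of $\sigma$ on $t$-tuples and $r_k$ for the $k$th row of $A$, formula \eqref{xisdefinition} then reads $x_{bn+c}=\widetilde\sigma^{\,c-1}(r_{b+1})$ for $c\in\{1,\dots,n\}$ and $b\in\{0,\dots,n^{t-1}-1\}$, this being the unique such parametrization of the index $i=bn+c$.

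Next I would suppose $x_{bn+c}=x_{b'n+c'}$ with $c,c'\in\{1,\dots,n\}$ and derive $(b,c)=(b',c')$. From $\widetilde\sigma^{\,c-1}(r_{b+1})=\widetilde\sigma^{\,c'-1}(r_{b'+1})$ and the fact that $\widetilde\sigma$ is a bijection, applying $\widetilde\sigma^{\,1-c'}$ gives $\widetilde\sigma^{\,c-c'}(r_{b+1})=r_{b'+1}$. Comparing first coordinates and recalling that the first column of $A$ is identically $v_1$ (column $1$ consists of a single block, and the first block of every column is all-$v_1$), one gets $\sigma^{\,c-c'}(v_1)=v_1$; since $\sigma$ is an $n$-cycle and $|c-c'|\le n-1$, this forces $c=c'$. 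Then $r_{b+1}=r_{b'+1}$, so the preceding Lemma yields $b+1=b'+1$, whence $(b,c)=(b',c')$ and $bn+c=b'n+c'$. Thus the $x_i$ are pairwise distinct, and being $n^t$ distinct elements of the $n^t$-element set $V_{K_n^t}$ they constitute an ordering of its vertices.

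I expect no real obstacle remains: the substantive work has been pushed into the block analysis (Claims \ref{blockshaveidenticalentries}--\ref{distinctblocks}) and into the Lemma. The one point worth flagging is that the single short computation above — peeling off powers of $\widetilde\sigma$ and then reading the first coordinate — simultaneously rules out a collision between two rows of the same matrix $A^k$ (the case $b=b'$, where the $n$-cycle property alone forces $c=c'$) and a collision between rows of different matrices, with the Lemma invoked only at the very end to identify which row of $A$ a given first row is. Making sure both of these regimes are covered by the one argument is the only subtlety; everything else is bookkeeping.
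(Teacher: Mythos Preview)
Your proposal is correct and follows essentially the same route as the paper: reduce distinctness of the $x_i$'s to the Lemma on $A$ via the internal $\sigma$-structure of each $A^k$. Your version is in fact a touch more explicit than the paper's summary paragraph---where the paper asserts that a collision of rows forces two of the $A^k$'s to be ``identical'' and hence to share first rows, you instead read off the first coordinate directly (using that column~1 of $A$ is constantly $v_1$) to pin down $c=c'$ before invoking the Lemma, which cleanly handles both the same-matrix and different-matrix cases in one stroke.
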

Over the course of this section, we have already made the arguments proving Theorem \ref{orderingofknt}.  Let's review.  We defined the $x_i$'s to be the rows of the matrices $A^1,\dots,A^{n^{t-1}}$.  We needed only to show that no two of those rows are identical.  For each $k$, $A^k$ has the characterisitic that $a^k_{i,j}=\sigma(a^k_{i-1,j})$, which implies that fixing a row fixes all of $A^k$.  (It also implies that $A^k$ cannot have any repeated rows itself.)  So if any two rows across all the rows of $A^1,\dots,A^{n^{t-1}}$ are identical, that would imply two of the $A^k$'s are identical.  Then of course the first rows of those two matrices are identical, which makes two rows in $A$ identical, but we just proved that this cannot happen.  So our definition of $x_1,\dots,x_{n^t}$ is indeed an ordering of $V_{K_n^t}$.

\subsection{The Ordering Induces a Consecutive Radio Labeling}
To prove that our ordering induces a consecutive radio labeling, we need to keep track of instances where ``close" vertices have common entries.  Let $e_{i,j}$ be the number of coordinates where $x_i$ and $x_j$ agree.  If $x_i=(v_{i_1},v_{i_2},\dots,v_{i_t})$ and $x_j=(v_{j_1},v_{j_2},\dots,v_{j_t})$, then
$$d_{K_n^t}(x_i,x_j)=\sum_{k=1}^td_{K_n}(v_{i_k},v_{j_k})=t-e_{i,j}.$$
Let $f$ be the radio labeling induced from our ordering.  The radio condition becomes
$$|f(x_i)-f(x_j)|\geq \diam(K_n^t)+1-d(x_i,x_j)=t+1-(t-e_{i,j})=1+e_{i,j}.$$
Equivalently, $e_{i,j}\leq|f(x_i)-f(x_j)|-1$.  In order for $f$ to be consecutive, we need $f(x_i)=i$ for all $i$, so we need $e_{i,j}$ to satisfy
\begin{equation}\label{eijcondition}e_{i,j}\leq|i-j|-1.\end{equation}
Since $e_{i,j}$ counts coordinates, $0\leq e_{i,j}\leq t$.  So $e_{i,i+s}$ trivially satisfies \eqref{eijcondition} if $s\geq t+1$.  Also, if $s=t$, then \eqref{eijcondition} is equivalent to requiring that $x_i$ and $x_{i+s}$ are not identical, which we've already proven must be the case.  We will show that $e_{i,i+s}$ satisfies \eqref{eijcondition} for all $s$.
\begin{theorem}\label{KntConsecutive}
Let $n\in\Z_{\geq3}$, $t\in\Z^+_{\leq n}$.  Then $K^t_n$ has a consecutive radio labeling.
\end{theorem}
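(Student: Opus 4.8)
The plan is to verify condition \eqref{eijcondition}, i.e.\ that $e_{i,i+s}\le s-1$ for every index $i$ and every $s\ge 1$ with $i+s\le n^t$; by the discussion preceding the theorem this is precisely what forces the induced labeling to be consecutive. Two ranges of $s$ are already handled there: if $s\ge t+1$ then $e_{i,i+s}\le t\le s-1$ because $e_{i,j}$ counts at most $t$ coordinates, and if $s=t$ then $e_{i,i+t}\le t-1=s-1$ because $x_i\ne x_{i+t}$ by Theorem \ref{orderingofknt}. Since $t\le n$, the only $s$ left to treat are $1\le s\le n-1$, and for those I will prove the sharp statement
$$e_{i,i+s}=0 \text{ if } s\ne n-1, \qquad e_{i,i+s}\le 1 \text{ if } s=n-1.$$
This suffices: when $s=1$ we have $s\ne n-1$ (as $n\ge 3$), so $e_{i,i+1}=0=s-1$; and when $2\le s\le n-1$ we get $e_{i,i+s}\le 1\le s-1$.

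To treat $1\le s\le n-1$ I would first isolate two structural facts about the matrices $A^k$. From $a^k_{i,j}=\sigma(a^k_{i-1,j})$ one gets $a^k_{c,j}=\sigma^{c-1}(a^k_{1,j})$, so row $c$ of $A^k$ is the coordinatewise image of row $1$ under $\sigma^{c-1}$; and for $k\ge 2$ the matrix $A^k$ agrees with $A^{k-1}$ except in the single column $t-p$ (notation as in the construction), where, since $\sigma$ is a fixed-point-free $n$-cycle, every entry genuinely changes. Now write $i=bn+c$ with $c\in\{1,\dots,n\}$, so that $x_i$ is row $c$ of $A^{b+1}$. If $c+s\le n$ then $x_{i+s}$ is row $c+s$ of the same matrix $A^{b+1}$; if $c+s>n$ then $x_{i+s}$ is row $c'=c+s-n$ of the next matrix $A^{b+2}$, where $1\le c'\le n-1$ and (one checks) $A^{b+2}$ exists whenever $x_{i+s}$ does.

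The rest is bookkeeping with exponents of $\sigma$. In the same-matrix case, $x_i$ and $x_{i+s}$ agree in coordinate $j$ iff $\sigma^{s}$ fixes the element $\sigma^{c-1}(a^{b+1}_{1,j})$, which is impossible for $1\le s\le n-1$; so $e_{i,i+s}=0$. In the consecutive-matrix case, for any column $j$ other than the one twisted column $t-p$, the two relevant entries are $\sigma^{c-1}(a^{b+1}_{1,j})$ and $\sigma^{c'-1}(a^{b+1}_{1,j})$, equal iff $c\equiv c'\pmod n$, i.e.\ iff $s\equiv 0\pmod n$, which cannot happen; and in the twisted column the entries are $\sigma^{c-1}(a^{b+1}_{1,t-p})$ and $\sigma^{c'}(a^{b+1}_{1,t-p})$, equal iff $c'-c+1\equiv 0\pmod n$, i.e.\ iff $s=n-1$. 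Thus at most one coordinate agrees, and only when $s=n-1$, which is exactly the sharp claim. I expect the main (modest) obstacle to be purely notational: keeping straight the shift in the exponent of $\sigma$ coming from the choice of row versus the single $\sigma$-twist that builds the next matrix, and checking the index arithmetic in the $c+s>n$ case so that $c'$ stays in $\{1,\dots,n-1\}$ and $A^{b+2}$ is never out of range. The cleanest write-up is to dispose of $s\ge t$ first, then prove the $1\le s\le n-1$ claim through the same-matrix and consecutive-matrix subcases, and finally assemble the pieces as above.
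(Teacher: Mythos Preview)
Your proposal is correct and follows essentially the same route as the paper: split into the same-matrix and consecutive-matrix cases, use the $\sigma$-structure of the rows to get $e_{i,i+s}=0$ in the first case and $e_{i,i+s}\le 1$ in the second, and then check that the $s=1$ case lands in the right bin. Your computation with $\sigma$-exponents pins down a slightly sharper intermediate fact (agreement in the twisted column occurs exactly when $s=n-1$), whereas the paper argues the $\le 1$ bound more qualitatively and then treats $s=1$ by hand, but the structure and ideas are the same.
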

\begin{proof}
We use the ordering $x_1,\dots,x_{n^t}$ of the vertices of $K_n^t$ from Theorem \ref{orderingofknt}, and we let $f:V_{K_n^t}\to\Z^+$ be defined as $f(x_i)=i$.  We will prove that $f$ satisfies the radio condition.  From the above discussion, we need to show that $e_{i,i+s}$ satisfies $e_{i,i+s}\leq|i-(i-s)|-1=s-1$ for all $1\leq s\leq t-1$.  

We will be referring again to the $A^k$'s -- the matrices from $\S$\ref{orderingDef} used to define $x_1,\dots,x_{n^t}$.  Let $x_i$ be a row in matrix $A^{k_i}$.  If $x_{i+s}$ is also a row in matrix $A^{k_i}$, then $e_{i,i+s}=0$ since $a_{i,j}^{k_i}=\sigma(a_{i-1,j}^{k_i})$.

Suppose $x_{i+s}$ is not a row in $A^{k_i}$.  By assumption $s\leq t-1< n$.  Since the $A^{k}$'s each have $n$ rows, $x_{i+s}$ must be a row in $A^{k_i+1}$.  Note that, because $s<n$, $x_i$ and $x_{i+s}$ cannot have the same placement as rows in $A^{k_i}$ and $A^{k_i+1}$ (i.e., if $x_i$ is the third row of $A^{k_i}$, then $x_{i+s}$ cannot be the third row of $A^{k_i+1}$).  We have seen that $A^{k_i}$ and $A^{k_i+1}$ are identical except for a single column, and that column differs by an application of $\sigma$.  These last two observations together imply that $e_{i,i+s}\leq1$.

With this, there is only one case left to consider: $s=1$.  We need to show $e_{i,i+1}=0$.  In this situation, $x_{i}$ must be the $n$th row of $A^{k_i}$ and $x_{i+s}$ must be the first row of $A^{k_i+1}$.  So $x_{i+1}$ is identical to the first row of $A^{k_i}$ except for one coordinate, let's call this the $c$th coordinate.  That is,
$$x_{i+1}=(a_{1,1}^{k_i},\dots,\sigma(a_{1,c}^{k_i}),\dots,a_{1,t}^{k_i}).$$  We also know that $x_i=(\sigma^{n-1}(a_{1,1}^{k_i}),\dots,\sigma^{n-1}(a_{1,c}^{k_i}),\dots,\sigma^{n-1}(a_{1,t}^{k_i}))$.  We can observe then that $x_i$ and $x_{i+1}$ share no common coordinates (because $\sigma$ is an $n$-cycle, $n\geq3$).  In particular, we check that the $c$th coordinates are different: indeed, $\sigma(a_{1,c}^{k_i})\neq\sigma^{n-1}(a_{1,c}^{k_i})$.  We have shown $e_{i,i+1}=0$.  Therefore, $e_{i,i+s}\leq s-1$ for all $s$, so $K_n^t$ has a consecutive radio labeling.
\end{proof}
\section{High powers of $G$}\label{highPowers}
It's natural to now wonder if $K_n^t$ has a consecutive radio labeling for all $t\in\Z^+$.  It turns out that we can find a high enough power $t$ such that $K_n^t$ no longer has a consecutive radio labeling.  With this next theorem we can do better -- we can find such a $t$ for an arbitrary graph $G$. 
\begin{theorem}{Given a graph $G$, there is an integer $s$ such that $G^t$ does not have a consecutive radio labeling for any $t\geq s$.  In particular, if $G$ has $n$ vertices, 
$$s=1+\sum_{i=\text{\emph{diam}}(G)}^{n-1}(n-i)\left\lfloor\frac{i}{\text{\emph{diam}}(G)}\right\rfloor$$ is such a value.}
\end{theorem}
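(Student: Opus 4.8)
The plan is to translate ``$G^t$ has a consecutive radio labeling'' into a packing constraint on an ordering of $V_{G^t}$ and then contradict it for large $t$ by an averaging argument over a single short block of that ordering. Write $D=\diam(G)$, so $\diam(G^t)=tD$ and $d_{G^t}=\sum_k d_G$ coordinatewise. A consecutive radio labeling is the same as an ordering $x_1,\dots,x_{n^t}$ of $V_{G^t}$ with
$$d_{G^t}(x_i,x_j)\ \ge\ tD+1-(j-i)\qquad(1\le i<j\le n^t),$$
equivalently, writing the total deficiency $W_{i,j}:=tD-d_{G^t}(x_i,x_j)=\sum_{k=1}^{t}\bigl(D-d_G(x_i^{(k)},x_j^{(k)})\bigr)\ge 0$,
$$W_{i,j}\ \le\ (j-i)-1\qquad(1\le i<j\le n^t).$$
For $D=1$ this is precisely the inequality $e_{i,j}\le|i-j|-1$ from the proof of Theorem~\ref{KntConsecutive}, with the count of agreeing coordinates replaced by the weighted quantity $W_{i,j}$.

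Suppose such an ordering exists. Since the claimed $s$ satisfies $s\ge 1+(n-D)\ge 2$ (take the $i=D$ term, the rest being nonnegative), it suffices to handle $t\ge 2$, and then $n^t\ge n+1$, so the block $x_1,\dots,x_{n+1}$ is available. Summing $W_{i,j}\le (j-i)-1$ over all $\binom{n+1}{2}$ pairs in $\{1,\dots,n+1\}$ and using the identity $\sum_{1\le i<j\le m}\bigl((j-i)-1\bigr)=\binom{m}{3}$ (the left side counts triples $i<k<j$) gives $\sum_{i<j}W_{i,j}\le\binom{n+1}{3}$. On the other hand $\sum_{i<j}W_{i,j}=\sum_{k=1}^{t}\Delta_k$, where $\Delta_k:=D\binom{n+1}{2}-\sum_{i<j}d_G(x_i^{(k)},x_j^{(k)})$ is the distance deficiency of the multiset of $n+1$ vertices $x_1^{(k)},\dots,x_{n+1}^{(k)}$ of $G$. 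The crux is a lower bound $\Delta_k\ge c$ for a positive $c=c(n,D)$ depending only on $n$ and $D$; granting it, $t\,c\le\sum_k\Delta_k\le\binom{n+1}{3}$, so $t\le\binom{n+1}{3}/c$, and with the value of $c$ supplied by the estimate below this reads $t\le\sum_{i=D}^{n-1}(n-i)\lfloor i/D\rfloor=s-1$, i.e.\ $G^t$ has no consecutive radio labeling once $t\ge s$.

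The remaining work, and the main obstacle, is this extremal estimate on $\Delta_k$. A weak bound is easy: $n+1$ vertices of an $n$-vertex diameter-$D$ graph cannot be pairwise at distance $D$ (that would force $n+1$ distinct, mutually antipodal vertices), so at least distance $D$ is lost from some pair and $\Delta_k\ge D$ — but this only yields $s=1+\lfloor\binom{n+1}{3}/D\rfloor$, which is weaker than claimed for $D\ge 2$. To get the sharp, $n$- and $D$-only constant that produces exactly $\sum_{i=D}^{n-1}(n-i)\lfloor i/D\rfloor$, I would argue layer by layer: fix an endpoint $u_0$ of a diametral shortest path $u_0-u_1-\cdots-u_D$ (so $d_G(u_a,u_b)=|a-b|$, making distances near $D$ scarce), classify the vertices of the multiset by whether their distance to $u_0$ lies in $[0,D),[D,2D),\dots$, and bound the deficiency forced within and between these ranges; summing the layer contributions should telescope into $\sum_{j\ge 1,\,jD\le n-1}\binom{n+1-jD}{2}$, which equals $\sum_{i=D}^{n-1}(n-i)\lfloor i/D\rfloor$ after exchanging the order of summation (and equals $\binom{n+1}{3}$ when $D=1$, recovering the sharp value $s=1+\binom{n+1}{3}$ for $K_n$). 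Controlling the cross terms in this layered count — ruling out that too many pairs drawn from different ranges are nevertheless at distance exactly $D$ — is the delicate point where the precise shape of $s$ is earned.
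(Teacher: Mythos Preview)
Your setup through $\sum_{i<j}W_{i,j}\le\binom{n+1}{3}$ is fine and parallels the paper's choice of the block $x_1,\dots,x_{n+1}$, but the argument breaks down at the extremal lower bound on $\Delta_k$. You assert that the layered count should give $\Delta_k\ge\sum_{j\ge1}\binom{n+1-jD}{2}=s-1$, and that for $D=1$ this ``recovers'' $s=1+\binom{n+1}{3}$. But feed $c=s-1$ back into your own inequality $t\le\binom{n+1}{3}/c$: for $D=1$ you get $t\le 1$, not $t\le s-1$, contradicting Theorem~\ref{KntConsecutive}. Indeed the bound $\Delta_k\ge\binom{n+1}{3}$ is plainly false for $D=1$: the multiset $\{v_1,\dots,v_n,v_1\}$ in $K_n$ has exactly one coinciding pair, so $\Delta_k=1$. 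What your framework actually needs is $\Delta_k\ge\binom{n+1}{3}/(s-1)$, a different and non-integer target in general, and nothing in your layer sketch addresses that quantity. So the ``delicate point'' is not merely delicate; as stated the target is wrong, and the proof is incomplete.

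The paper avoids this extremal problem entirely by tracking a coarser statistic. Instead of the full deficiency $W_{i,j}$, it counts $e_{i,j}$, the number of coordinates in which $x_i$ and $x_j$ literally agree. Since each agreeing coordinate contributes $D$ to $W_{i,j}$, one has $e_{i,j}\le W_{i,j}/D\le(|i-j|-1)/D$, and because $e_{i,j}$ is an integer this sharpens to $e_{i,j}\le\lfloor(|i-j|-1)/D\rfloor$. It is precisely this floor, which your passage to $W_{i,j}$ throws away, that makes the sum over pairs in $\{1,\dots,n+1\}$ come out to exactly $s-1$. The per-coordinate lower bound is then the trivial pigeonhole statement that among $n+1$ entries drawn from an $n$-element set some two coincide; equivalently $\sum_{i<j}e_{i,j}=\sum_{k=1}^t\delta_k\ge t$, forcing $t\le s-1$. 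The paper's route thus trades your hard (and mis-specified) extremal estimate for an easy pigeonhole, at the cost only of passing from $W_{i,j}$ to the cruder $e_{i,j}$ before summing.
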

\begin{proof}
Let $V_{G} = \{v_1,\dots, v_n\}$.  Let $s=1+\sum_{i=\text{\textrm{diam}}(G)}^{n-1}(n-i)\left\lfloor\frac{i}{\text{\textrm{diam}}(G)}\right\rfloor$, and let $t\in\Z$, $t\geq s$.  In search of contradiction, suppose $x_1,\dots,x_{n^t}$ is an ordering of the vertices such that $f:V_{G^t}\to \Z^+$ defined by $f(x_i)= i$ is a radio labeling.  Then $f$ must satisfy the radio condition for all $x_i, x_j\in V_{G^t}$:
\begin{equation}\label{powerDistance}
d(x_i, x_j)\geq\diam(G^t)-|f(x_i)-f(x_j)|+1=t\cdot\diam(G)-|i-j|+1.
\end{equation}

We can get a get an upper bound for $d(x_i, x_j)$ by counting the number of coordinates for which $x_i$ and $x_j$ have common entries.  If we define $e_{i,j}$ to be this number of coordinates where $x_i$ and $x_j$ agree, then 
\begin{equation}\label{powerDistance}
d(x_i,x_j)\leq\diam(G)(t-e_{i,j}).
\end{equation}
Combining these two bounds on $d(x_i,x_j)$ yields
\begin{align*}
t\cdot\diam(G)-|i-j|+1&\leq\diam(G)(t-e_{i,j})\\
t-\frac{|i-j|-1}{\diam(G)}&\leq t-e_{i,j}\\
e_{i,j}&\leq\frac{|i-j|-1}{\diam(G)}.
\end{align*}
Then the maximum possible number of coordinates with common entries between $x_i$ and $x_j$ is
$$E_{i,j}=\min\left\{t,\left\lfloor\frac{|i-j|-1}{\diam(G)}\right\rfloor\right\}.$$
Fix $x_i$.  The maximum number of coordinates that $x_i$ can have in common with any prior vertex is $\sum_{j=1}^{i-1}E_{i,j}$.  Then the total number of coordinates in which any of the first $n+1$ vertices in the ordering can agree is
\begin{equation*}
\underbrace{\ \sum_{j=1}^{1}E_{2,j}\ }_{
\begin{smallmatrix}
\text{number of coordinates}\\x_2\text{ can have in common}\\\text{with any prior vertex}
\end{smallmatrix}
}
+\underbrace{\ \sum_{j=1}^{2}E_{3,j}\ }_{
\begin{smallmatrix}
\text{number of coordinates}\\x_3\text{ can have in common}\\\text{with any prior vertex}
\end{smallmatrix}
}
+\ \ \ \cdots\ \ \ +\underbrace{\ \sum_{j=1}^{n}E_{n+1,j}\ }_{
\begin{smallmatrix}
\text{number of coordinates}\\x_{n+1}\text{ can have in common}\\\text{with any prior vertex}
\end{smallmatrix}
}
=\sum_{i=2}^{n+1}\sum_{j=1}^{i-1}E_{i,j}.
\end{equation*}
We can obtain with some computation that
{\small\begin{align*}
\sum_{i=2}^{n+1}\sum_{j=1}^{i-1}E_{i,j}&=\sum_{i=2}^{n+1}\sum_{j=1}^{i-1}\left\lfloor\frac{|i-j|-1}{\diam(G)}\right\rfloor\\
&=\cancel{\sum_{j=1}^1\left\lfloor\frac{|2-j|-1}{\diam(G)}\right\rfloor}+\sum_{i=3}^{n+1}\sum_{j=1}^{i-1}\left\lfloor\frac{|i-j|-1}{\diam(G)}\right\rfloor\\
&=\sum_{j=1}^2\left\lfloor\frac{|3-j|-1}{\diam(G)}\right\rfloor+\sum_{j=1}^3\left\lfloor\frac{|4-j|-1}{\diam(G)}\right\rfloor+\cdots+\sum_{j=1}^n\left\lfloor\frac{|n+1-j|-1}{\diam(G)}\right\rfloor\\
&=\sum_{j=1}^2\left\lfloor\frac{2-j}{\diam(G)}\right\rfloor+\sum_{j=1}^3\left\lfloor\frac{3-j}{\diam(G)}\right\rfloor+\cdots+\sum_{j=1}^n\left\lfloor\frac{n-j}{\diam(G)}\right\rfloor\\
&=\sum_{i=1}^1\left\lfloor\frac{i}{\diam(G)}\right\rfloor+\sum_{i=1}^2\left\lfloor\frac{i}{\diam(G)}\right\rfloor+\cdots+\sum_{i=1}^{n-1}\left\lfloor\frac{i}{\diam(G)}\right\rfloor\\
&=(n-1)\left\lfloor\frac{1}{\diam(G)}\right\rfloor+(n-2)\left\lfloor\frac{2}{\diam(G)}\right\rfloor+\cdots+\left\lfloor\frac{n-1}{\diam(G)}\right\rfloor\\
&=\sum_{i=1}^{n-1}(n-i)\left\lfloor\frac{i}{\diam(G)}\right\rfloor=\sum_{i=\diam(G)}^{n-1}(n-i)\left\lfloor\frac{i}{\diam(G)}\right\rfloor=s-1<t.
\end{align*}}

Since $t$ is larger than the number of coordinates with this property, there is at least one coordinate in which none of the vertices $x_1,\dots,x_{n+1}$ agree.  This is impossible to accomplish, however, as we have only $n$ possible entries: $v_1,\dots, v_n$.  So there is no ordering of the $n^t$ vertices of $G^t$ that induces a consecutive radio labeling.
\end{proof}

\begin{corollary}\label{KntForHight}
$K_n^t$ does not have a consecutive labeling for any $t\geq1+\frac{n(n^2-1)}{6}$.
\end{corollary}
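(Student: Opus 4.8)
The plan is to obtain this as an immediate specialization of the preceding theorem to the graph $G=K_n$. First I would record that $\diam(K_n)=1$ for every $n\ge 2$, so each floor $\left\lfloor i/\diam(K_n)\right\rfloor$ occurring in the theorem's formula for $s$ is just $i$. Hence the value of $s$ guaranteed by the theorem becomes
$$s=1+\sum_{i=\diam(K_n)}^{n-1}(n-i)\left\lfloor\frac{i}{\diam(K_n)}\right\rfloor=1+\sum_{i=1}^{n-1}(n-i)\,i.$$

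The next step is the routine evaluation of this sum. Writing $\sum_{i=1}^{n-1}(n-i)\,i=n\sum_{i=1}^{n-1}i-\sum_{i=1}^{n-1}i^2$ and applying the standard identities $\sum_{i=1}^{n-1}i=\frac{(n-1)n}{2}$ and $\sum_{i=1}^{n-1}i^2=\frac{(n-1)n(2n-1)}{6}$, one factors out $\frac{n(n-1)}{6}$ to get $\frac{n(n-1)}{6}\bigl(3n-(2n-1)\bigr)=\frac{n(n-1)(n+1)}{6}=\frac{n(n^2-1)}{6}$. Therefore $s=1+\frac{n(n^2-1)}{6}$, and the theorem states exactly that $K_n^t$ has no consecutive radio labeling whenever $t\ge s$, which is the asserted bound.

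There is essentially no obstacle: the only genuine content is the algebraic simplification of the sum, and the one point worth a sentence of care is that the theorem, unlike Theorem \ref{KntConsecutive}, carries no restriction of the form $t\le n$, so it is legitimately applicable for the large exponents in question (note $s>n$ once $n\ge 3$). It is also worth remarking in passing that this corollary shows the hypothesis $t\le n$ in Theorem \ref{KntConsecutive} genuinely cannot be removed.
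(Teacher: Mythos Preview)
Your proposal is correct and matches the paper's own proof essentially line for line: both note that $\diam(K_n)=1$ reduces the floor to $i$, then evaluate $\sum_{i=1}^{n-1}(n-i)i$ via the standard formulas for $\sum i$ and $\sum i^2$ to obtain $s=1+\frac{n(n^2-1)}{6}$. Your closing remarks about the absence of a $t\le n$ hypothesis are extra commentary not in the paper, but they are accurate and harmless.
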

\begin{proof}
This is a simple computation:
\begin{align*}
s=1+\sum_{i=1}^{n-1}(n-i)\left\lfloor\frac{i}{1}\right\rfloor&=1+\sum_{i=1}^{n-1}ni-i^2\\
&=1+n\sum_{i=1}^{n-1}i-\sum_{i=1}^{n-1}i^2\\
&=1+n\left(\frac{n(n-1)}{2}\right)-\frac{n(n-1)(2n-1)}{6}\\
&=1+\frac{n(n^2-1)}{6}.
\end{align*}
\end{proof}

\section{Open Questions}
Together, Theorem \ref{KntConsecutive} and Corollary \ref{KntForHight} give us a lot of information about $K_n^t$.  If $1\leq t\leq n$, then $K_n^t$ has a consecutive radio labeling, so $\textrm{rn}(K_n^t)=|V_{K_n^t}|=n^t$.  If $t\geq1+\frac{n(n^2-1)}{6}$ then $K_n^t$ does not have a consecutive radio labeling, so $\textrm{rn}(K_n^t)>n^t$.  This is illustrated in Figure \ref{fig:Kn_graph}.

\begin{figure}[h]
\begin{center}
\includegraphics[scale=1]{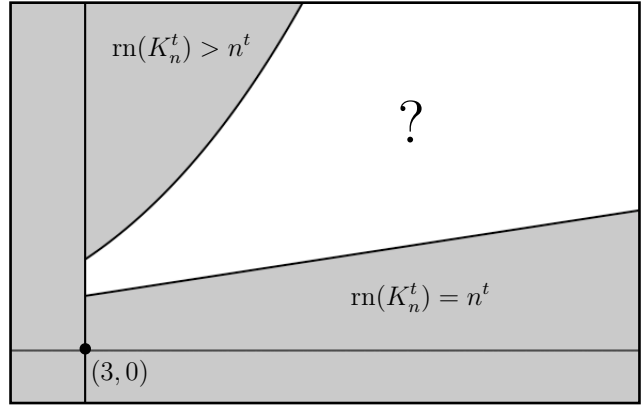}
\caption{An $n$ versus $t$ plot describing what we know (and what we don't) about $\textrm{rn}(K_n^t)$.}
\label{fig:Kn_graph}
\end{center}
\end{figure}

Determining the behavior of $K_n^t$ for $n<t<1+\frac{n(n^2-1)}{6}$ is one project that could be considered.

\end{document}